\documentclass[12pt,a4paper]{amsart}
\usepackage{amssymb,enumitem,mathtools}

\usepackage[breaklinks=true,colorlinks=true,linkcolor=blue,citecolor=red,urlcolor=green]{hyperref}


\usepackage[margin=1in,footskip=0.25in]{geometry}


\usepackage{color}

\def\mapsto{\longmapsto}

\def\cwedge{\bigcirc\kern-1.07em\wedge\ }

\newcommand \br{\mathbb{R}}

\newcommand \so{\mathfrak{so}}
\newcommand \ad{\operatorname{ad}}
\newcommand \<{\left<}
\renewcommand \>{\right>}
\newcommand \ip{\< \cdot, \cdot \>}

\newcommand \g{\mathfrak{g}}
\newcommand \gv{\mathfrak{v}}
\newcommand \gl{\mathfrak{l}}
\newcommand \gn{\mathfrak{n}}
\newcommand \gz{\mathfrak{z}}
\newcommand \ga{\mathfrak{a}}
\newcommand \gs{\mathfrak{s}}

\newcommand \Tr{\operatorname{Tr}}
\newcommand \rk{\operatorname{rk}}
\newcommand{\Ric}{\operatorname{Ric}}

\newcommand \Span{\operatorname{Span}}

\theoremstyle{plain}

\newtheorem*{conj*}{Conjecture}
\newtheorem{thm}{Theorem}[section]

\newcommand{\bal}{\begin{aligned}}
\newcommand{\eal}{\end{aligned}}

\usepackage{float}

\theoremstyle{definition}
\newtheorem*{defn*}{Definition}

\theoremstyle{remark}
\newtheorem{rem}[thm]{Remark}
\newtheorem{ex}{Example}

\numberwithin{equation}{section}






\begin{document}	

\title[]{On weakly Einstein Lie groups}

\author{Yunhee Euh}
\address{Department of Mathematics, Sungkyunkwan University, Suwon 16419, Korea}
\email{prettyfish@skku.edu}

\author{Sinhwi Kim}
\address{Department of Mathematics, Sungkyunkwan University, Suwon 16419, Korea}
\email{kimsinhwi@skku.edu}

\author{Yuri Nikolayevsky}
\address{Department of Mathematical and Physical Sciences, La Trobe University, Melbourne, 3086, Australia}
\email{y.nikolayevsky@latrobe.edu.au}

\author{JeongHyeong Park}
\address{Department of Mathematics, Sungkyunkwan University, Suwon 16419, Korea}
\email{parkj@skku.edu}

\thanks {
The research of YE  was supported by Basic Science Research Program through the National Research Foundation of Korea(NRF) funded by the Ministry of Education(Grant number RS-2023-00244736). \\
\indent The research of SK was supported by Basic Science Research Program through the National Research Foundation of Korea(NRF) funded by the Ministry of Education(Grant number RS-2023-00247409). \\
\indent The research of YN was partially supported by ARC Discovery grant DP210100951. \\
\indent The research of JP was supported by the National Research Foundation of Korea(NRF) grant funded by the Korea government(MSIT) (RS-2024-00334956). }	

\subjclass[2020]{53C25, 53C30, 17B30}


\keywords{weakly Einstein, left-invariant metric, solvable Lie algebra}

\begin{abstract} 
A Riemannian manifold is called \emph{weakly Einstein} if the tensor $R_{iabc}R_{j}^{~~abc}$ is a scalar multiple of the metric tensor $g_{ij}$. We consider weakly Einstein Lie groups with a left-invariant metric which are weakly Einstein. We prove that there exist no weakly Einstein non-abelian $2$-step nilpotent Lie groups and no weakly Einstein non-abelian nilpotent Lie groups whose dimension is at most $5$. We also prove that an almost abelian Lie group is weakly Einstein if and only if at the Lie algebra level it is defined by a normal operator whose square is a multiple of the identity. 
\end{abstract}

\maketitle

\section{Introduction}
\label{s:intro}

An $n$-dimensional Riemannian manifold $(M,g)$ is said to be \emph{weakly Einstein} if
    \begin{equation} \label{eq:wE}
        \check{R}_{ij}:=R_{iabc}R_{j}^{~~abc}=\frac{\|R\|^2}{n}g_{ij},
    \end{equation}
where $R$ is the curvature tensor. The definition of weakly Einstein manifolds was introduced by Euh, Park, and Sekigawa in their study of universal curvature identities on 4-dimensional Riemannian manifold \cite{EPS15, EPS13, EPS14}. We mention several known results on weakly Einstein manifolds. In the $3$-dimensional case, by~\cite[Lemma~5]{GHMV}, any weakly Einstein manifold is either of constant curvature, or has its Ricci operator of rank one (in dimension $2$, any Riemannian manifold is weakly Einstein). In dimension $4$, any Einstein manifold is weakly Einstein by \cite{EPS13} (which originally motivated the definition). It is important to emphasize that in general the properties of being Einstein and being weakly Einstein are independent (although in a class of manifolds possessing a specific structure, these two conditions may be equivalent \cite[Corollary 2.3.1]{KPS23}). Moreover, in general the proportionality function $\|R\|^2$ in the definition above does not have to be constant. However, if a manifold is weakly Einstein \emph{and} Einstein (or more generally, if its Ricci tensor is Codazzi), then $\|R\|^2$ must be constant when $n \ne 4$ \cite[Lemma~3.3]{BV}. Some authors include the requirement of not being Einstein in the definition of weakly Einstein manifold. We do not do that and will always say explicitly when the non-Einstein condition is being assumed.

Several facts on weakly Einstein manifolds can be found in \cite[\S 6.55-6.63]{Be}. Conformally flat weakly Einstein manifolds are classified in \cite[Theorem~2]{GHMV}. For results on the weakly Einstein condition in extrinsic geometry we refer the reader to~\cite{KNP} and references therein.

There is a number of results on \emph{homogeneous} weakly Einstein manifolds known in the literature. In~\cite[Example~3.7]{EPS14} the authors gave an example of solvable, almost abelian Lie group of dimension $4$ with a left-invariant metric which is weakly Einstein, but not Einstein. This example is referred in the literature as an \emph{EPS space} (see Theorem~\ref{thm:aa} and Remark~\ref{rem:eps} below). Furthermore, any irreducible symmetric space is weakly Einstein (as the isotropy representation is irreducible); a reducible symmetric space is weakly Einstein if the factors have the same proportionality constant $\|R\|^2/n$. Arias-Marco and Kowalski classified weakly Einstein $4$-dimensional homogeneous Riemannian manifolds~\cite{AK}: they showed that if such manifold is not Einstein, then it must be either an EPS space or a symmetric space (which in dimension $4$ gives the Riemannian product of two surfaces of constant Gauss curvatures $\kappa > 0$ and $-\kappa$).

In this paper we study the weakly Einstein condition for Lie groups with left-invariant metric. Our first main result is motivated by the construction of the EPS spaces; it shows that any weakly Einstein almost abelian Lie group is a higher-dimensional generalization of the EPS space. Recall that a Lie group is called \emph{almost abelian} if it contains an abelian normal subgroup of codimension $1$. A Lie algebra $\gs$ is called almost abelian if it contains a codimension $1$ abelian ideal $\ga$. Clearly any almost abelian Lie algebra is solvable. Given an almost abelian Lie algebra $\gs$ equipped with an inner product $\ip$ we take a unit vector $e \in \gs$ orthogonal to $\ga$ and denote $Q= \ad(e)|_\ga$. We prove the following:

\begin{thm}\label{thm:aa}
Let $G$ be an almost abelian Lie group with a left-invariant metric $g$. The space $(G,g)$ is weakly Einstein if and only if the operator $Q$ is normal and its symmetric part $S$ squares to a multiple of the identity \emph{(}and $(G,g)$ is not Einstein if $S$ by itself is not a multiple of the identity\emph{)}.
\end{thm}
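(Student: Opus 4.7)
The plan is to convert the weakly Einstein condition into an operator identity on the abelian ideal $\ga$, and then extract the two structural conditions on $Q$ via a trace argument combined with the Cauchy--Schwarz inequality.

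First I would write $Q = S + A$ with $S$ the symmetric and $A$ the skew-symmetric part of $Q$ with respect to the inner product on $\ga$. The Koszul formula then gives, for $X, Y \in \ga$,
\[
\nabla_X Y = \<SX, Y\> e, \qquad \nabla_X e = -SX, \qquad \nabla_e X = AX, \qquad \nabla_e e = 0,
\]
and a direct curvature computation yields, for $X, Y, Z \in \ga$, $R(X, Y)Z = \<SX, Z\> SY - \<SY, Z\> SX$ and $R(X, Y) e = 0$, together with $R(e, X) Y = -\<TX, Y\> e$ and $R(e, X) e = TX$, where $T := SQ - AS = S^2 + [S, A]$. Both $T$ and $[S, A]$ turn out to be symmetric.

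Next, working in an orthonormal basis $\{e_1, \ldots, e_{n-1}, e\}$ with $e_i \in \ga$, I would evaluate $\check R$ by a case analysis on which contracted indices equal $e$. The mixed components $\check R(X, e)$ vanish automatically, while
\[
\check R(e, e) = 2 \Tr(T^2), \qquad \check R|_\ga = 2\bigl(\Tr(S^2)\, S^2 - S^4 + T^2\bigr).
\]
Therefore $(G, g)$ is weakly Einstein if and only if the operator equation
\[
\Tr(S^2)\, S^2 - S^4 + T^2 = \Tr(T^2)\, \id_\ga \qquad (\ast)
\]
holds on $\ga$. The sufficiency direction is then immediate: if $Q$ is normal, $[S, A] = 0$ and so $T = S^2$; if additionally $S^2 = \lambda\, \id_\ga$, both sides of $(\ast)$ collapse to $\lambda^2 (n-1)\, \id_\ga$.

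The main obstacle is the converse, which I would handle by taking the trace of $(\ast)$. Since the cyclic property of trace gives $\Tr(S^2 [S, A]) = 0$ and hence $\Tr(T^2) = \Tr(S^4) + \Tr([S, A]^2)$, the trace of $(\ast)$ reduces to
\[
\Tr(S^2)^2 = (n-1)\Tr(S^4) + (n-2)\Tr([S, A]^2).
\]
Here is the crux: the Cauchy--Schwarz inequality applied to the eigenvalues of $S$ gives $\Tr(S^2)^2 \le (n-1) \Tr(S^4)$, with equality if and only if all eigenvalues of $S^2$ coincide; on the other hand, $[S, A]$ being symmetric yields $\Tr([S, A]^2) \ge 0$. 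For $n \ge 3$ these two bounds are thus forced into simultaneous equality, giving $[S, A] = 0$ (so $Q$ is normal) and $S^2 = \lambda\, \id_\ga$; the case $n = 2$ is trivial since $\dim \ga = 1$. The parenthetical Einstein statement then follows from $\Ric|_\ga = S^2 - \Tr(S)\, S - T$ and $\Ric(e, e) = -\Tr(T)$, which under the weakly Einstein assumptions simplify to $\Ric|_\ga = -\Tr(S)\, S$ and $\Ric(e, e) = -\lambda(n-1)$; the Einstein condition $-\Tr(S)\, S = -\lambda(n-1)\, \id_\ga$ on $\ga$ then forces $S$ itself to be a multiple of the identity.
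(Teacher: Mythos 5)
Your proposal is correct, and it sets the problem up exactly as the paper does: the same connection and curvature formulas, the same operator $T$ (the paper's $P=S^2+[S,K]$, with your $A$ playing the role of $K$), and the same reduction of the weakly Einstein condition to the operator identity $(\ast)$, which is equation \eqref{eq:aawE} in the paper. Where you genuinely diverge is in extracting the converse from $(\ast)$. The paper multiplies $(\ast)$ by $S^2$ before taking traces, arrives at $\Tr([S,K]^2S^2)=\Tr([S,K]^2)\Tr(S^2)$, and then runs an eigenvalue case analysis (ordering the $a_i^2$, using the nonnegativity of the diagonal entries of $[S,K]^2$, and disposing of a residual rank-one case by a parity-of-rank argument); this yields $[S,K]=0$ first, after which the full operator equation is used again to get $S^2=a^2 I_{n-1}$. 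You instead take the plain trace of $(\ast)$, which after the observation $\Tr(S^2[S,A])=0$ becomes $\Tr(S^2)^2=(n-1)\Tr(S^4)+(n-2)\Tr([S,A]^2)$, and note that Cauchy--Schwarz together with $\Tr([S,A]^2)\ge 0$ forces simultaneous equality in both bounds, delivering $[S,A]=0$ and $S^2=\lambda\,\id_\ga$ in one stroke. This is a cleaner route: it needs only a scalar consequence of $(\ast)$ for the converse, avoids the case analysis entirely, and the paper's troublesome residual case is excluded automatically by strict Cauchy--Schwarz. Your treatment of $n=2$ and of the Einstein dichotomy via the Ricci operator also matches the paper. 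I see no gap.
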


From Theorem~\ref{thm:aa} it is clear that no nilpotent, non-abelian almost abelian Lie group is weakly Einstein. We found no counterexamples to the following conjecture.

\begin{conj*}
  No nilpotent, non-abelian Lie group with a left-invariant metric is weakly Einstein.
\end{conj*}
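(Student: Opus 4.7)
The aim is to show that no non-abelian nilpotent Lie algebra $(\gn,\ip)$ gives rise to a weakly Einstein left-invariant metric. The plan is to argue by contradiction: assume the curvature satisfies $\check R = c\,g$ with $c = \|R\|^2/n$. By a theorem of Milnor, a left-invariant metric on a nilpotent Lie group is flat if and only if the group is abelian (the only skew-adjoint nilpotent operator is zero), so in the non-abelian setting $R \not\equiv 0$ and hence $c > 0$. The strict positivity of $c$ is the lever that one hopes will clash with the nilpotent structure.

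The first step I would take is to exploit Milnor's companion result that any non-abelian nilpotent metric Lie algebra has $\Tr \Ric < 0$ while the Ricci form takes both strictly positive and strictly negative values on unit vectors. Combined with the scalar identity $\check R = c\,g$, one tries to produce algebraic relations between $c$, $\|\Ric\|^2$, and the sectional curvatures. A natural focal point is the center $\gz$: for $z \in \gz$ one has $\ad(z)=0$, so the Koszul formula for the Levi-Civita connection simplifies, and many summands in $\check R(z,z) = \sum_{a,b,c} R(z,e_a,e_b,e_c)^2$ become explicit quadratic expressions in the structure constants involving $\ad^*(z)$ alone. Evaluating the scalar identity separately on $\gz$ and on $\gz^{\perp}$ should yield two competing expressions for the same constant $c$ that are hard to reconcile.

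A parallel route is induction on dimension. Every nilpotent Lie algebra has nontrivial center, so one may quotient by a central line $\gz_0 \subset \gz$ and pass from $\gn$ to $\gn/\gz_0$, using the $2$-step nilpotent case and the cases $\dim\le 5$ proved in the paper as base cases. The difficulty here is that the curvature of the quotient metric is not the orthogonal projection of $R$ from $\gn$, so the weakly Einstein condition need not descend verbatim; the argument must be carried out at the level of structure constants rather than directly at the level of curvature, ideally by isolating a ``top-degree'' component of $\check R$ that is detected by central elements.

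The principal obstacle is that the weakly Einstein equation couples all components of $R$ quadratically, while $R$ itself is a quartic expression in the structure constants of $\gn$, and there is no structural classification of nilpotent Lie algebras in arbitrary dimension. A proof would therefore have to either produce a single algebraic obstruction---perhaps a sharp inequality between $\|R\|^2$, $\Tr(\Ric^2)$, and the scalar curvature that is forced to fail for non-abelian nilpotent $\gn$---or carry out a genuine induction via central quotients that tracks the structure constants carefully. Either route appears to require substantial new algebraic input beyond the ingredients used in the proof of Theorem~\ref{thm:aa}, which is presumably why the statement is left as a conjecture rather than a theorem.
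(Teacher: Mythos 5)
The statement you were asked about is stated in the paper as an open \emph{conjecture}, not a theorem: the authors explicitly write that they found no counterexamples and then support it only with two special cases (Theorem~\ref{thm:2step} for $2$-step nilpotent groups and Theorem~\ref{thm:5nilp} for dimension at most $5$), plus the almost abelian case which follows from Theorem~\ref{thm:aa}. Your proposal, to its credit, does not pretend otherwise: it is a survey of possible attack routes and obstacles, and it correctly concludes that the general statement remains open. So there is no ``paper's own proof'' to compare against, and your text is not a proof either --- it establishes nothing beyond the (correct) preliminary observations that a non-abelian nilpotent metric Lie algebra is non-flat, hence $\|R\|^2>0$, and that Milnor's results give mixed-sign Ricci curvature and negative scalar curvature.

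Regarding the substance of your sketches: the facts you cite from Milnor are accurate, and the idea of evaluating $\check R$ on the center versus its orthogonal complement is in the same spirit as what the paper actually does in the $2$-step case (where $\check R_{ab}$ on the derived algebra and $\check R_{ij}$ on its complement are computed separately via the operators $J_A$, and a trace inequality forces a contradiction). However, your two main proposals both stall exactly where you say they do. The Ricci-based route needs a concrete inequality linking $\Tr(\Ric^2)$, $\|R\|^2$ and the scalar curvature that fails for non-abelian nilpotent algebras, and no such inequality is produced. The induction-via-central-quotients route founders on the point you yourself identify: the weakly Einstein condition does not descend to the quotient metric, and the paper's dimension-$5$ proof instead resorts to a case-by-case computation over the classification of $5$-dimensional nilpotent Lie algebras --- a method that does not scale. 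In short, your assessment of the problem is sound, but the proposal contains no step that advances beyond the partial results already in the paper; as a proof of the conjecture it has an unfilled (and, as far as the paper is concerned, currently unfillable) gap at its core.
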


Note that the analogous fact in the \emph{Einstein} case is well known \cite[Theorem~2.4]{M}. We support the Conjecture by the following two results. 

\begin{thm}\label{thm:2step}
No non-abelian, $2$-step nilpotent Lie group admits a left-invariant, weakly Einstein metric.
\end{thm}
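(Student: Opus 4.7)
The plan is to use the standard description of a 2-step nilpotent metric Lie algebra. Write $\gn = \gv \oplus \gz$ orthogonally, where $\gz$ is the centre (necessarily containing $[\gn, \gn]$), and introduce the skew-symmetric operators $J_Z \colon \gv \to \gv$ defined by $\langle J_Z X, Y \rangle = \langle [X,Y], Z \rangle$ for $X,Y \in \gv$. The Levi-Civita connection has the familiar form $\nabla_X Y = \tfrac12 [X,Y]$ for $X,Y \in \gv$, $\nabla_X Z = \nabla_Z X = -\tfrac12 J_Z X$, $\nabla_Z W = 0$; from this one derives (Eberlein) the curvature formulas
\[
R(X,Y)U = -\tfrac14 J_{[Y,U]}X + \tfrac14 J_{[X,U]}Y + \tfrac12 J_{[X,Y]}U, \quad R(X,Z)W = -\tfrac14 J_Z J_W X,
\]
$R(Z,W)X = \tfrac14 [J_Z, J_W] X$, with $R(Z,W)A = 0$ for $Z,W,A \in \gz$, and the remaining mixed components analogous.

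A first reduction disposes of the ``flat central directions''. If $\gz_0 := \gz \cap [\gn,\gn]^\perp \neq 0$, then $J_Z = 0$ for $Z \in \gz_0$, and all curvature components involving such a $Z$ vanish. Hence $\check R(Z,Z) = 0$, so the weakly Einstein constant $c = \|R\|^2/n$ must vanish, forcing $R \equiv 0$; by Milnor's theorem a nilpotent Lie group with a flat left-invariant metric is abelian, contradiction. So one may assume $\gz = [\gn,\gn]$, i.e.\ the map $Z \mapsto J_Z$ is injective.

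Next I would compute the $\gz$-block of $\check R$. A direct expansion using the formulas above, together with the identity $\|A B\|_{HS}^2 = \Tr(A^2 B^2)$ for skew $A,B$, yields
\[
\check R(Z,Z) \;=\; \tfrac14\sum_\alpha \Tr(J_Z^2 J_{Z_\alpha}^2) \;-\; \tfrac18 \sum_\alpha \Tr\bigl((J_Z J_{Z_\alpha})^2\bigr)
\]
for an ONB $\{Z_\alpha\}$ of $\gz$. Setting $\check R(Z,Z) = c \|Z\|^2$ and reading off the coefficient of $z_\alpha^2$, this is equivalent to the family of identities
\[
\sum_\gamma \|J_{Z_\alpha} J_{Z_\gamma}\|^2 \;+\; \tfrac12 \sum_\gamma \|[J_{Z_\alpha}, J_{Z_\gamma}]\|^2 \;=\; 8c \qquad \text{for every } \alpha.
\]
Analogous (but more intricate) pointwise identities come from $\check R(X,X) = c\|X\|^2$ on $\gv$, and the vanishing of the mixed block $\check R|_{\gv\times\gz} = 0$ gives further compatibility conditions coupling $[X, J_Z Y]$ and $J_Z J_W X$. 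A useful global consistency check is the trace relation obtained by equating $\Tr(\check R|_\gv)/\dim\gv$ and $\Tr(\check R|_\gz)/\dim\gz$, which reduces to
$3p\, T_4 + (2p + m)\, T_3 + 2(p - m)\, T_1 = 0$,
where $m = \dim\gv$, $p = \dim\gz$, $T_1 = \sum\|J_{Z_\alpha}J_{Z_\beta}\|^2$, $T_3 = \sum\Tr((J_{Z_\alpha}J_{Z_\beta})^2)$, $T_4 = \sum (\Tr(J_{Z_\alpha}J_{Z_\beta}))^2$; this already kills the Heisenberg groups $H_{2k+1}$ on its own.

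The main obstacle is showing that, in general, the pointwise identities above force all $J_{Z_\alpha}$ to vanish. The plan is to exploit the fact that $8c$ is independent of $\alpha$ in the boxed identity: choosing $Z \in \gz$ of unit norm that maximises the invariant $Z \mapsto \|J_Z\|_{HS}^2$ (equivalently, picking a largest eigenvector of the Gram operator $W \mapsto \sum_\gamma \langle J_W, J_{Z_\gamma}\rangle Z_\gamma$), one derives via a Lagrange multiplier argument that $J_Z$ satisfies strong algebraic relations with the remaining $J_{Z_\gamma}$'s; combining these with the $\gv$-side identities and the cross-block vanishing, together with the sign constraints $T_1 \ge 0$, $T_4 \ge 0$, $|T_3| \le T_1$, then forces $J_Z = 0$, contradicting $\gz = [\gn,\gn] \neq 0$. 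The delicate algebraic manipulation of these quartic-in-$J$ identities, particularly in the non-commuting regime where $[J_{Z_\alpha}, J_{Z_\beta}] \neq 0$, is where most of the work will go.
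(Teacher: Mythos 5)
Your setup is sound and your computations check out as far as they go: the reduction to $\gz=[\gn,\gn]$, the curvature formulas, the diagonal $\gz$-block identity $\sum_\gamma \|J_{Z_\alpha}J_{Z_\gamma}\|^2+\tfrac12\sum_\gamma\|[J_{Z_\alpha},J_{Z_\gamma}]\|^2=8c$, and the global trace relation $3pT_4+(2p+m)T_3+2(p-m)T_1=0$ all agree with what one gets from the paper's formulas (in the paper's notation, with $S_{ab}=-\tfrac12(J_aJ_b+J_bJ_a)$, one has $\check R_{aa}=\tfrac18\sum_c\Tr(3S_{aa}S_{cc}-2S_{ac}^2)+\tfrac18\Tr(S_{aa}^2)\cdot 0$ -- more precisely $\check R_{ab}=\tfrac18\sum_c\Tr(3S_{ab}S_{cc}-2S_{ac}S_{bc})$ and $\check R_{ij}=\tfrac18\sum_{a,b}\langle(4S_{ab}^2+3\Tr(S_{ab})S_{ab})e_i,e_j\rangle$, while $\check R_{ia}\equiv 0$ identically, so the mixed block imposes no conditions, contrary to what you suggest). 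But the proof stops exactly where the theorem actually lives. Everything after ``The main obstacle is showing that\dots'' is a declaration of intent, not an argument: the extremal choice of $Z$ maximizing $\|J_Z\|^2$, the Lagrange-multiplier relations, and the claim that the sign constraints ``then force $J_Z=0$'' are never derived, and your single trace relation cannot close the general case on its own since $T_3$ and $p-m$ can both be negative. As written, this is an honest research plan with a correct first page and no proof of the theorem.

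For comparison, the paper's finishing move is short and worth knowing: choose the orthonormal basis of $\gz$ to diagonalize the Gram form $(A,B)\mapsto -\Tr(J_AJ_B)$ (so $\Tr(S_{ab})=0$ for $a\ne b$) and order it so that $\Tr(S_{11})$ is \emph{minimal} (you propose the maximal direction instead). Then take the $a=1$ diagonal $\gz$-equation multiplied by $\Tr(S_{11})$, and the $\gv$-equation traced against the non-negative operator $S_{11}=-J_1^2$; subtracting, the common constant $8C\Tr(S_{11})$ cancels and one is left with $2\Tr(S_{11})\Tr(S_{11}^2)+3\sum_{c\ne1}(\Tr(S_{cc})-\Tr(S_{11}))\Tr(S_{cc}S_{11})+4\Tr(S_{11}\sum_{a,b}S_{ab}^2)+2\sum_{c\ne1}\Tr(S_{11})\Tr(S_{1c}^2)=0$, in which every summand is nonnegative (positivity of $S_{11}$ and $S_{cc}$, minimality of $\Tr(S_{11})$) and the first is strictly positive. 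That pairing of the two diagonal blocks against the single operator $S_{11}$ is the idea your proposal is missing; without it, or a worked-out substitute, the proof is incomplete.
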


\begin{thm}\label{thm:5nilp}
No non-abelian nilpotent Lie group of dimension at most $5$ admits a left-invariant, weakly Einstein metric.
\end{thm}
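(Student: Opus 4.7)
The plan is a reduction to a short finite list of Lie algebras followed by a direct curvature computation in each case. Theorem~\ref{thm:2step} eliminates every $2$-step nilpotent possibility, so it suffices to rule out a weakly Einstein left-invariant metric on any non-abelian nilpotent Lie group of dimension at most $5$ and nilpotency class $\ge 3$. Consulting the classification of low-dimensional nilpotent Lie algebras, these are: in dimension $4$, the filiform algebra $\gn_4$ with $[e_1,e_2]=e_3,\ [e_1,e_3]=e_4$; and in dimension $5$, the reducible algebra $\gn_4\oplus\br$ together with the small finite list of indecomposable nilpotent Lie algebras of class $\ge 3$ (including the $4$-step filiform $\gn_5$).

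For each such algebra $\gn$, I would fix an orthonormal basis adapted to the descending central series $\gn\supset\gn^1=[\gn,\gn]\supset\gn^2=[\gn,\gn^1]\supset\cdots$, a filtration that is canonical and hence respected by every left-invariant inner product. The residual freedom is orthogonal rotation inside each graded piece $\gn^k\cap(\gn^{k+1})^\perp$, and I would use it to reduce the structure constants to a canonical form depending on as few real parameters as possible. Koszul's formula
\[
2\<\nabla_X Y,Z\>=\<[X,Y],Z\>-\<[Y,Z],X\>+\<[Z,X],Y\>
\]
then delivers the Levi-Civita connection, the curvature tensor $R$, and finally the tensor $\check{R}_{ij}=R_{iabc}R_j{}^{abc}$ of~\eqref{eq:wE}. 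The weakly Einstein condition becomes a finite polynomial system requiring every off-diagonal entry of $\check{R}$ to vanish and all diagonal entries to coincide; the objective is to exhibit in each case a single polynomial identity in the normalized parameters that cannot hold.

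A uniform device for producing such an obstruction is to weight-grade monomials in the structure constants by the depth in the descending central series, so that $\check{R}$ becomes weight-homogeneous and the top-weight component of a well-chosen off-diagonal entry $\check{R}_{ij}$ collapses to a monomial in the deepest-step brackets that cannot vanish under the class-$\ge 3$ hypothesis. The case $\gn_4\oplus\br$ then follows at once because the extra $\br$-summand only adds metric parameters without affecting this obstructing monomial. The main obstacle I anticipate is the dimension-$5$ indecomposable algebra of class $3$ in which two independent brackets land in the top graded piece $\gn^2$ simultaneously: the residual orthogonal symmetry is smaller there, and the canonical form of the metric genuinely retains several free parameters, so one has to be careful in selecting which off-diagonal component of $\check{R}$ to track. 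The plan is to iterate the weight-homogeneity argument --- inspecting not only the leading but also the sub-leading weighted parts of $\check{R}_{ij}$ --- until an algebraic identity in the top-step bracket coefficients is produced whose failure then rules out~\eqref{eq:wE}.
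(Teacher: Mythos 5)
Your reduction framework coincides with the paper's: eliminate the $2$-step cases by Theorem~\ref{thm:2step}, pass to the finite classification of nilpotent algebras of class at least $3$, normalize an orthonormal basis using the residual orthogonal freedom, and derive a polynomial system from~\eqref{eq:wE}. (Two small efficiencies you miss: the dimension-$4$ filiform algebra, as well as $\gl_{4,3}\times\br$ and $\gl_{5,7}$ in dimension $5$, are \emph{almost abelian}, so Theorem~\ref{thm:aa} disposes of them with no curvature computation; and the normalized orthonormal bases you want to construct by hand are already available in the literature, reducing the three surviving algebras $\gl_{5,5},\gl_{5,6},\gl_{5,9}$ to a single $7$-parameter bracket family.)

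The genuine gap is in the claimed ``uniform device.'' You assert that after weight-grading the structure constants by depth in the descending central series, $\check{R}$ becomes weight-homogeneous and a well-chosen off-diagonal entry has top-weight part equal to a single non-vanishing monomial in the deepest-step brackets. Neither claim survives contact with the actual computation. Once the basis is adapted to the metric, a single bracket such as $[E_1,E_2]=aE_3+bE_4+cE_5$ has components landing in different graded pieces, so the structure constants carry mixed weights and the degree-$4$ entries of $\check{R}$ are sums of monomials of several different weights; there is no leading term that survives for all parameter values. More importantly, the system $\check{R}_{ij}=C\delta_{ij}$ for the three remaining algebras does not admit any single-monomial obstruction: ruling it out requires a branching case analysis (the subcases $a=0$, $d=0$, $g=0$, then the generic case) followed by successive eliminations and, in the generic branch, resultants of degree-$3$ and degree-$4$ polynomials in $a^2,d^2,g^2$ whose final incompatibility manifests as a nonzero constant times $a^{50}$. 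Your plan to ``iterate the weight-homogeneity argument until an algebraic identity is produced'' is a statement of hope rather than a proof: it gives no criterion for which entries to track, no guarantee of termination, and no actual identity. As written, the proposal correctly sets up the problem but omits the entire content of the argument for $\gl_{5,5}$, $\gl_{5,6}$ and $\gl_{5,9}$, which is where all the work lies.
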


In Example~\ref{ex:cossin} we construct a weakly Einstein solvable Lie group whose Lie algebra has a codimension $2$ abelian ideal. The simplest example of a non-solvable weakly Einstein, non-Einstein Lie group would be the Riemannian product of the group $\mathrm{SU}(2)$ with the bi-invariant metric of curvature $\kappa$ and the hyperbolic plane of curvature $-\sqrt{2}\kappa$ viewed as a two-dimensional solvable Lie group.


\section{Preliminaries}
\label{s:prel}

Let $(G,g)$ be a Lie group with a left-invariant metric $g$. Let $\g$ be the Lie algebra of $G$ and denote $\ip$ the inner product on $\g$ (the value of $g$ at the identity of $G$).

By Koszul formula, for the Levi-Civita connection of the metric $g$ we have
 \begin{equation} \label{eq:koszul}
 \<\nabla_UV,W\>=\tfrac12 (\<[U,V],W\>-\<[V,W],U\>+\<[W,U],V\>),
 \end{equation}
for left-invariant vector fields $U,V,W\in\g$. We define the curvature tensor by the standard formula $R(X,Y,U,V)= \<(\nabla_X \nabla_Y - \nabla_Y \nabla_X - \nabla_{[X,Y]})U,V\>$ for left-invariant vector fields $X,Y,U,V\in\g$.

\section{Weakly Einstein solvable Lie groups. Proof of Theorem~\ref{thm:aa}}
\label{s:solv}

In this section, we prove Theorem~\ref{thm:aa}, and then give an example of a solvable weakly Einstein Lie group whose Lie algebra has a codimension $2$ abelian ideal.

\begin{proof}
Let $\gs$ be an almost abelian Lie algebra with the inner product $\ip$, and let $\ga \subset \gs$ be a codimension $1$ abelian ideal. Let $\{e_i\}$ be an orthonormal basis such that $\ga = \Span(e_2, \dots, e_n)$. Denote $Q = \ad(e_1)|_\ga$, and let $S$ and $K$ be the symmetric and the skew-symmetric part of $Q$, respectively, so that $S=\frac{1}{2}(Q+Q^t)$ and $K=\frac{1}{2}(Q-Q^t)$. We note that $Q$ is normal (that is, $[Q,Q^t]=0$) if and only if $[S,K]=0$.

From~\eqref{eq:koszul} we obtain
\begin{equation*}
     \nabla_{e_1}e_1=0, \quad \nabla_{e_1}X = KX, \quad \nabla_X e_1=-SX, \quad \nabla_XY = \<SX,Y\>e_1,
\end{equation*}
for $X, Y \in \ga$. Then the curvature tensor is given by
\begin{equation} \label{eq:aaR}
\begin{gathered}
  R(X,Y)Z = \<SX,Z\>SY - \<SY,Z\>SX, \\ R(e_1,X)Y = -\<PX,Y\>e_1, \quad \text{where } P=[S,K]+S^2,
\end{gathered}
\end{equation}
for $X, Y, Z \in \ga$, and so computing $\check{R}$ by~\eqref{eq:wE} we obtain
\begin{equation*}
  \check{R}(X,Y) = 2\<(P^2 + \Tr(S^2)S^2 - S^4)X,Y\>, \qquad \check{R}(e_1,X) = 0, \qquad \check{R}(e_1,e_1) = 2 \Tr(P^2).
\end{equation*}
Hence the weakly Einstein condition is given by the following equation:
\begin{equation} \label{eq:aawE}
  P^2 + \Tr(S^2)S^2 - S^4 = \Tr(P^2) I_{n-1}.
\end{equation}
Multiplying both sides of \eqref{eq:aawE} by $S^2$ and taking the traces we get $\Tr((P^2-S^4)S^2)=\Tr(P^2-S^4)\Tr(S^2)$. But note that $P^2-S^4 = [S,K]^2+[S,K]S^2+S^2[S,K]$ and that $\Tr([S,K]S^m)=\Tr(S^m[S,K])=0$, for any $m \ge 0$. Hence we obtain $\Tr([S,K]^2 S^2) = \Tr([S,K]^2) \Tr(S^2)$. Assuming the matrix $S$ to be diagonal, with the diagonal entries $a_i$ such that $a_1^2 \ge a_2^2 \ge \ldots \ge a_{n-1}^2$, and $d_i, \, i=1, \ldots, n-1$, to be the diagonal entries of $[S,K]^2$ (note that $d_i \ge 0$) we obtain $\sum_i (a_i^2 (\sum_j d_j - d_i)) = 0$, which is only possible when either all $a_i$ are zero (and so $S=0$), or all $d_i$ are zero (and so $[S,K]=0$), or $a_1 \ne 0, \, d_1 \ne 0$ and $a_i =d_i=0$ for $i \ge 2$ (which may not occur as $\rk [S,K]^2$ must be even). Thus $[S,K] = 0$ which proves that $Q$ is normal.

Now in~\eqref{eq:aawE} we have $P=S^2$ and so $\Tr(S^2)S^2 = \Tr(S^4) I_{n-1}$ which is equivalent to the fact that $S^2 = a^2 I_{n-1}$ for some $a \ge 0$.

It remains to note that by~\eqref{eq:aaR}, the Ricci operator is given by $\Ric e_1 = -\Tr(S^2)e_1$ and $\Ric X = -(S^2+\Tr(S) S-\left<SX,X\right>S)X$ for every unit vector $X \in \ga$, which shows that the left-invariant metric is Einstein if and only if $S$ is a multiple of the identity (which is a well-known fact).
\end{proof}

\begin{rem} \label{rem:eps}
  Note that the weakly Einstein solvmanifolds in Theorem~\ref{thm:aa} can be viewed as higher-dimensional generalizations of the EPS spaces introduced in~\cite[Example~3.7]{EPS14}. Indeed, given a normal operator $Q$ acting on $\ga$ whose symmetric part squares to $a^2 I_{n-1}$, one can choose an orthonormal basis for $\ga$ relative to which the matrix $Q$ has a block-diagonal form, with all the diagonal blocks being either of size $1 \times 1$ and containing $a$ or $-a$, or of size $2 \times 2$ and having the form $\pm \left(\begin{smallmatrix} a & b_j \\ -b_j & a \end{smallmatrix} \right)$, where $b_j$ are arbitrary nonzero numbers.

  We also note the following. In the above notation, define the metric solvable, almost abelian Lie algebra $\gs'$ on the same Euclidean space $\br e_1 \oplus \ga$, with $\ga$ being an abelian ideal and with $\ad(e_1)|_\ga = S$. Although $\gs$ and $\gs'$ are not isomorphic when $K \ne 0$, the condition that $Q$ is normal implies that the corresponding solvmanifolds are isometric (see~\cite[Theorem~3.3]{A}, \cite[Lemma~4.2]{AW}, or \cite[Proposition~2.5]{H}). It follows that the weakly Einstein almost abelian solvmanifold in Theorem~\ref{thm:aa} can be viewed as a certain homogeneous hypersurface in the product of two hyperbolic spaces of the same curvature (but of arbitrary dimensions).
\end{rem}

We now give an example of a solvable weakly Einstein Lie group whose Lie algebra has a codimension $2$ abelian ideal.

\begin{ex} \label{ex:cossin}
Let $G$ be a Lie group with the Lie algebra $\gs$ whose nonzero Lie brackets relative to an orthonormal basis $\{f_1, f_2, e_1, \ldots, e_m\}, \; m \ge 3$, are given by
\begin{equation*}
[f_1,e_j]=\cos\bigl(\tfrac{2\pi j}{m}\bigr) \, e_j, \qquad  [f_2,e_j]=\sin\bigl(\tfrac{2\pi j}{m}\bigr)\, e_j.
\end{equation*}
For $j=1, \ldots, m$, we denote $\lambda_{1j}=\cos\left(\frac{2\pi j}{m}\right)$ and $\lambda_{2j}=\sin\left(\frac{2\pi j}{m}\right)$.
From~\eqref{eq:koszul} we have the following for the Levi-Civita connection:
\begin{equation*}
\nabla_{f_a}f_b=\nabla_{f_a}e_j=0, \quad \nabla_{e_j}f_a=-\lambda_{aj}e_j, \quad \nabla_{e_j} e_k=\delta_{jk}(\lambda_{1j}f_1+\lambda_{2j}f_2),
\end{equation*}
for $j,k=1, \ldots, m$ and $a,b =1,2$. This gives the following (potentially) nonzero components of the curvature tensor:
\begin{equation*}
    R(f_a,e_j, e_j, f_b) =-\lambda_{aj}\lambda_{bj}, \qquad     R(e_j,e_k,e_k,e_j)=-(\lambda_{1j}\lambda_{1k}+\lambda_{2j}\lambda_{2k}),
\end{equation*}
from which $\check{R}(f_a,e_j) = 0$ and
\begin{align*}
\check{R}(f_a,f_b)&=2\sum\nolimits_{h=1}^m \lambda_{ah}\lambda_{bh}(\lambda_{1h}^2+\lambda_{2h}^2) = \delta_{ab} m , \\
\check{R}(e_j,e_k)&=2\delta_{jk} \sum\nolimits_{h=1}^m (\lambda_{1j}\lambda_{1h}+\lambda_{2j}\lambda_{2h})^2 = \delta_{jk}m.
\end{align*}
It follows that $(G,g)$ is weakly Einstein (and it is easy to see that it is not Einstein).
\end{ex}

\section{2-step nilpotent groups. Proof of Theorem~\ref{thm:2step}}
\label{s:2step}

Let $\gn$ be a 2-step nilpotent Lie algebra with an inner product $\ip$. Let $N$ be the associated Lie group with the left-invariant metric induced by $\ip$. Consider the orthogonal decomposition $\gn = \gv \oplus \gz$, where $\gz$ is the derived algebra of $\gn$. Denote $d_\gz = \dim \gz$ and $d_\gv = \dim \gv$.

For every $A\in\gz$, we define $J_A:\gv\rightarrow\gv$ by
\begin{equation*}
\<J_AX,Y\>=\<[X,Y],A\>,
\end{equation*}
for $X,Y\in\gv$. Note that for all $A \in \gz$, the operator $J_A$ on $\gv$ is skew-symmetric, and that the map from $\gz$ to $\so(\gv)$ defined by $A \mapsto J_A$ is injective.

From~\eqref{eq:koszul} we obtain
\begin{eqnarray*}
\nabla_AX=\nabla_XA=-\tfrac{1}{2}J_AX,\qquad
\nabla_XY=\tfrac{1}{2}[X,Y], \qquad \nabla_AB = 0,
\end{eqnarray*}
for (left-invariant vector fields) $X, Y \in \gv$ and $A, B \in \gz$. These give the following well-known formulas for the curvature tensor:
\begin{gather*}
R(A,B)C=0, \qquad R(X,A)B = -\tfrac14 J_AJ_BX, \qquad R(A,B)X =\tfrac14 [J_A,J_B](X),\\
R(X,Y)A =-\tfrac14 [X,J_AY]+\tfrac14 [Y,J_AX], \qquad R(X,A)Y =-\tfrac14 [X,J_AY], \\
R(X,Y)Z =\tfrac12 J_{[X,Y]}Z-\tfrac14 J_{[Y,Z]}X-\tfrac14 J_{[Z,X]}Y,
\end{gather*}
where $X,Y,Z\in\gv$ and $A,B,C\in\gz$.

Choose an orthonormal basis $\{e_1, \dots, e_{d_\gv + d_\gz}\}$ for $\gn$ in such that $\Span(e_1, \dots, e_{d\gz}) = \gz$, and adopt the following index convention: $i,j \in \{d_\gz+1, \dots, d_\gv+d_\gz\}$ and $a,b,c \in \{1, \dots, d_\gz\}$. We abbreviate $J_{e_a}$ to $J_a$, and define the symmetric operators $S_{ab}$ on $\gv$ by
\begin{equation*}
S_{ab}= -\tfrac12 (J_aJ_b+J_bJ_a), \qquad \text{for } a, b = 1, \dots, d_\gz.
\end{equation*}
Note that all the operators $S_{aa}$ are non-negative definite and nonzero.

A straightforward computation by formula~\eqref{eq:wE} gives the following: 
\begin{equation}\label{eq:2stepcheckcR}
\begin{split}
\check{R}_{ia}&=0,\\
\check{R}_{ab}&= \frac18 \sum_c \Tr (2J_aJ_c^2J_b-J_bJ_cJ_aJ_c) = \frac18 \sum_c \Tr (3S_{ab}S_{cc}-2S_{ac}S_{bc}),\\
\check{R}_{ij}&= \frac{1}{16}\sum_{a,b}\<(-[J_a,J_b]^2 +2 J_aJ_b^2J_a +6 \Tr (J_aJ_b) J_aJ_b + 6 (J_aJ_b)^2)e_i,e_j\> \\
&=\frac18 \sum_{a,b}\<(4S_{ab}^2 + 3 \Tr (S_{ab})S_{ab})e_i,e_j\>.
\end{split}
\end{equation}

\begin{proof}[Proof of Theorem~\ref{thm:2step}] Suppose a $2$-step nilpotent Lie group $G$ with a left-invariant metric is weakly Einstein. In the above notation, we can assume that the orthonormal basis $\{e_a\}, \, a= 1, \dots, d_\gz$, is chosen in such a way that $\Tr (S_{ab}) = 0$ for $a \ne b$ and that $\Tr (S_{11}) \le \Tr (S_{aa})$, for all $a, b = 1, \dots, d_\gz$ (note that $\Tr (S_{11}) > 0$).

By~\eqref{eq:2stepcheckcR}, the weakly Einstein condition gives
\begin{eqnarray*}
\Tr \Big(S_{aa}^2+3\sum_{c\neq a}S_{aa}S_{cc}-2\sum_{c\neq a}S_{ac}^2\Big)=8C,\qquad
4\sum_{a,b}S_{ab}^2+3\sum_a\Tr (S_{aa})S_{aa}=8C \, I_{d_\gv},
\end{eqnarray*}
for some $C > 0$. Take $a=1$ in the first equation and multiply both of its sides by $\Tr(S_{11})$. Then multiply both sides of the second equation by $S_{11}$ and take the traces. Subtracting the resulting two equations from one another we obtain
\begin{multline*}
  2 \Tr (S_{11})\Tr (S_{11}^2)+3 \sum_{c\neq 1}(\Tr (S_{cc})-\Tr (S_{11}))\Tr (S_{cc}S_{11}) \\
  +4\Tr \Big(S_{11}\sum_{a,b}S_{ab}^2\Big)+2\sum_{c\neq 1}\Tr (S_{11})\Tr (S_{1c}^2)=0.
\end{multline*}
But now every term on the left-hand side is nonnegative, and the first one is strictly positive, which gives the desired contradiction.
\end{proof}

\section{$5$-dimensional nilpotent Lie groups. Proof of Theorem~\ref{thm:5nilp}}
\label{s:5nilp}

Suppose a non-abelian, nilpotent Lie group $N$ of dimension $n \le 5$ with a left-invariant metric $g$ is weakly Einstein. Let $(\gn, \ip)$ be the corresponding metric nilpotent Lie algebra. By~\cite{AK} (or by using the fact that any nilpotent algebra of dimension at most $4$ is either $2$-step nilpotent or almost abelian and then applying Theorem~\ref{thm:aa} and Theorem~\ref{thm:2step}), we can assume that $n = 5$. We will also assume that the nilpotency class of $\gn$ is at least $3$ (by Theorem~\ref{thm:2step}).

The classification of nilpotent Lie algebras of dimension $5$ is well known in the literature. We will use the classification and the notation of \cite[Section~4]{Gr}, according to which a nilpotent Lie algebra of dimension $5$ with nilpotency class at least $3$ is isomorphic to one of the following five Lie algebras given by their nonzero Lie brackets relative to a basis $\{F_1, F_2, F_3, F_4, F_5\}$:
\begin{itemize}
    \item $\gl_{4,3} \times \br$: $[F_1,F_2]=F_3$, $[F_1,F_3]=F_4$
    \item $\gl_{5,5}$: $[F_1, F_2]=F_4$, $[F_1, F_3]=F_5$, $[F_2, F_4]=F_5$.
    \item $\gl_{5,6}$: $[F_1, F_2]=F_3$, $[F_1, F_3]=F_4$, $[F_1, F_4]=F_5$, $[F_2, F_3]=F_5$.
    \item $\gl_{5,7}$: $[F_1, F_2]=F_3$, $[F_1, F_3]=F_4$, $[F_1, F_4]=F_5$.
    \item $\gl_{5,9}$: $[F_1, F_2]=F_3$, $[F_1, F_3]=F_4$,  $[F_2, F_3]=F_5$.
\end{itemize}
Note that the algebras $\gl_{4,3} \times \br$ and $\gl_{5,7}$ are almost abelian (with $\Span(F_2, F_3, F_4, F_5)$ being a codimension $1$ abelian ideal), and so are prohibited by Theorem~\ref{thm:aa}. It follows that $\gn$ is isomorphic to one of the algebras $\gl_{5,5}, \gl_{5,6}$ or $\gl_{5,9}$. According to~\cite[Theorems~9, 11, 14]{FN}, for the metric Lie algebra $(\gn, \ip)$ one can choose an orthonormal basis $\{E_i\}$ such that the Lie brackets are given by
\begin{equation}\label{eq:L56_lie}
\begin{gathered}
    [E_1, E_2]=aE_3+bE_4+cE_5, \quad [E_1, E_3]=dE_4+fE_5, \\
    [E_1, E_4]=gE_5, \quad [E_2, E_3]=hE_5,
\end{gathered}
\end{equation}
where for $\gn \cong \gl_{5,5}$ we have $b, g, h \ne 0 = a$, for $\gn \cong \gl_{5,9}$ we have $a, h, d \ne 0 = f = g$, and for $\gn \cong \gl_{5,6}$ we have $a,d,g,h\ne0$. Note that $h \ne 0$ in all three cases.

The rest of the proof requires some direct (computer aided) computation. Using~\eqref{eq:koszul} we find the components of the Levi-Civita connection relative to the basis $\{E_i\}$, and then the components of the curvature tensor (those which are not listed, up to $\mathbb{Z}_2$-symmetry,  are zeros):
\begin{equation}
\begin{aligned}
    \label{eq:L56_curvature}
    R_{1212}&=\tfrac14(3a^2+3b^2+3c^2),\quad R_{1213}=\tfrac34(bd+cf),\quad R_{1214}=\tfrac14(ad+3cg),\\
    R_{1215}&=\tfrac14(af+bg),\quad R_{1223}=\tfrac34 ch,\quad R_{1225}=\tfrac14 ah,\quad R_{1234}=-\tfrac14 gh,\\
    R_{1245}&=\tfrac14 dh,\quad R_{1313}=\tfrac14(-a^2+3d^2+3f^2),\quad R_{1314}=\tfrac14(-ab+3fg),\\
    R_{1315}&=\tfrac14(-ac+dg),\quad R_{1323}=\tfrac34 fh,\quad R_{1324}=\tfrac14 gh,\quad R_{1335}=-\tfrac14 ah,\\
    R_{1345}&=-\tfrac14 bh,\quad R_{1414}=\tfrac14(-b^2-d^2+3g^2),\quad R_{1415}=-\tfrac14(bc+df),\\
    R_{1423}&=\tfrac12 gh,\quad R_{1515}=-\tfrac14(c^2+f^2+g^2),\quad R_{1524}=-\tfrac14 dh,\quad R_{1525}=-\tfrac14 fh,\\
    R_{1534}&=\tfrac14 bh,\quad R_{1535}=\tfrac14 ch,\quad R_{2323}=\tfrac14(-a^2+3h^2),\quad R_{2324}=-\tfrac14 ab,\\
    R_{2325}&=-\tfrac14 ac,\quad R_{2334}=\tfrac14 ad,\quad R_{2335}=\tfrac14 af,\quad R_{2345}=\tfrac14(-cd+bf),\\
    R_{2424}&=-\tfrac14 b^2,\quad R_{2425}=-\tfrac14 bc,\quad R_{2434}=-\tfrac14 bd,\quad R_{2435}=\tfrac14(-cd+ag),\\
    R_{2445}&=\tfrac14 bg,\quad R_{2525}=-\tfrac14(c^2+h^2),\quad R_{2534}=\tfrac14(-bf+ag),\quad R_{2535}=-\tfrac14 cf,\\
    R_{2545}&=-\tfrac14 cg,\quad R_{3434}=-\tfrac14 d^2,\quad R_{3435}=-\tfrac14 df,\quad R_{3445}=\tfrac14 dg,\\
    R_{3535}&=-\tfrac14(f^2+h^2),\quad R_{3545}=-\tfrac14 fg,\quad R_{4545}=-\tfrac14 g^2.
\end{aligned}
\end{equation}
The weakly Einstein condition is strongly over-determined: computing $\check{R}_{ij}$ (by~\eqref{eq:wE}) we obtain $14$ homogeneous equations of degree $4$ for seven unknowns $a,b,c,d,f,g$ and $h$.

We outline the steps of the proof (which can be easily verified) without giving explicit expressions. 

First suppose that $a=0$. Up to a rotation of the basis in the $(E_2,E_3)$-plane, we can assume that $b=0$ (and then $d \ne 0$, for otherwise $\gn$ is $2$-step nilpotent). Then {{$\check{R}_{45} = \frac14 df(c^2 + d^2 + f^2 + h^2)=0$}} and $\check{R}_{35} = \frac14 dg (2d^2 - f^2)=0$, and so $f=g=0$. But then {{$\check{R}_{22} = \frac14 (c^2 + h^2)(5 c^2 + d^2 + 5 h^2)$}} and $\check{R}_{55} = \frac14 (c^2 + h^2)(c^2 + d^2 + h^2)$, which cannot be equal as $h \ne 0$. Thus $a \ne 0$.

Next assume that $d=0$. Up to simultaneous rotations in the $(E_1,E_2)$- and $(E_3,E_4)$-planes, we can make $f=0$. But then $c=0$ from the equation $\check{R}_{35} = \frac14 a c (a^2 + b^2 + c^2 + g^2)=0$, which gives $\check{R}_{15} = -\frac12  a h (a^2 + b^2) \ne 0$ as $a, h \ne 0$, a contradiction. So $d \ne 0$.

Now suppose that $g=0$. Then up to simultaneous rotations of the basis in the $(E_1,E_2)$- and $(E_4,E_5)$-planes, we can make $f=0$. Then $bc=0$ from {{$\check{R}_{45} = \frac14 b c (a^2 + b^2 + c^2 + d^2 + h^2) = 0$}}. Moreover, {{$\check{R}_{44} = \frac14 (b^2 + d^2) (a^2 + b^2 + c^2 + d^2 + h^2)$}} and {{$\check{R}_{55} = \frac14 (c^2 + h^2) (a^2 + b^2 + c^2 + d^2 + h^2)$}} which gives {{$b^2 - c^2 + d^2 - h^2 = 0$}}. Then from equations {{$\check{R}_{15} = -\frac14  a h (2 a^2 + 2 b^2 - c^2 - d^2) = 0$}} and $\check{R}_{24} = \frac14 a d (2 a^2 - b^2 + 2 c^2 - h^2)=0$ we obtain $b=c=0$ and $d^2 = h^2 = 2 a^2$. But then the matrix of $\check{R}$ is given by $\frac14 a^4 \mathrm{diag}(27,27,33,10,10)$, a contradiction.

Therefore we have $a,d,g,h \ne 0$. As $a, d, h \ne 0$, we can express $f$ in terms of $b, d$ and $c$ from {{$\check{R}_{14} = \frac14  a h (3 b c - d f) = 0$}} and then substitute back into $\check{R}$. We denote $\check{R}^{(1)}$ the resulting matrix multiplied by $4d^4$; its entries are homogeneous polynomials of degree $8$. We have {{$\check{R}^{(1)}_{15}=-a d^4 h  (2 a^2 + 2 b^2 - c^2 - d^2)$}}, and so $c^2=2 a^2 + 2 b^2 - d^2$. Substituting this into all the entries of $\check{R}^{(1)}$ we obtain the matrix $\check{R}^{(2)}$ such that the degree of each of its entries in $c$ is at most $1$. We have $\check{R}^{(2)}_{25}=-6 b d^4 g (a^2 + b^2 - d^2)$.

We first assume that $b \ne 0$. Then from $\check{R}^{(2)}_{25}=0$ we have $b^2 = d^2 - a^2$, and we can substitute this expression in the rest of the entries to obtain the matrix $\check{R}^{(3)}$ whose entries have degree at most $1$ in each of the variables $b$ and $c$. Now $\check{R}^{(3)}_{45} = -4 b d^4  (9 a^2 c + a d g - 12 c d^2  - c h^2)$. We solve for $g$ and then substitute into the other entries of the matrix $\check{R}^{(3)}$, multiply the resulting matrix by $a^4 d^{-3}$ and reduce modulo the relations $c^2 = 2 a^2 + 2 b^2 - d^2$ and $b^2 =  d^2 - a^2$ to obtain the matrix $\check{R}^{(4)}$ whose entries are homogeneous polynomials of degree $9$ in the variables {{$(a,b,c,d,h)$}} having degree at most $1$ in $b$. But then the only entries of $\check{R}^{(4)}$ which contain $b$ are $\check{R}^{(4)}_{12}, \, \check{R}^{(4)}_{23}$ and $\check{R}^{(4)}_{34}$, and in all three $b$ is a factor. It follows that if $\check{R}^{(4)}$ is a multiple of the identity matrix for some values of $(a,b,d,c,h)$, then it is also (the same) multiple of the identity matrix, if we take the same values for $a,d,c$ and $h$ and replace $b$ with $0$. Therefore we can assume that $b=0$ in the matrix $\check{R}^{(2)}$.

Taking $b= 0$ in $\check{R}^{(2)}$ and dividing all the entries by $d^4$ we obtain the matrix $\check{R}^{(5)}$ whose entries are homogeneous polynomials of degree $4$. From the equation $a \check{R}^{(5)}_{13} + 4 h \check{R}^{(5)}_{35}=0$ we obtain $c= -d g (a^2 - 8 d^2)/(a (2 d^2 + g^2 + 5 h^2))$. Substituting this in the other entries and multiplying the resulting matrix by $a (2 d^2 + g^2 + 5 h^2)$ we obtain the matrix $\check{R}^{(6)}$ with $\check{R}^{(6)}_{35} = (-3 a^4 + 26 a^2 d^2 - a^2 g^2 - 12 d^4 + 10 d^2 g^2 + 10 d^2 h^2) a d g$. Solving for $h^2$ and substituting into the rest of the matrix and multiplying by $200 d^6$ to clear the denominators we obtain a matrix all of whose entries are divisible by $a(a^2 - 8 d^2)$. Dividing by this expression (it cannot be zero as $\check{R}^{(6)} \ne 0$) we get the matrix $\check{R}^{(7)}$ whose entries are homogeneous polynomials of degree $10$ in the variables $a, d$ and $g$. Then the expressions for $(10 a d^3)^{-1}\check{R}^{(7)}_{24}$, for $(20 d^2)^{-1} (\check{R}^{(7)}_{22} - \check{R}^{(7)}_{33})$ and for $(10 d^2)^{-1} (\check{R}^{(7)}_{11} - \check{R}^{(7)}_{44})$ give three homogeneous polynomials $P_i,\, i=1,2,3$, of degrees $3, 4$ and $4$, respectively, in the variables $a^2,d^2$ and $g^2$. Computing the resultant $P_1$ and $P_2$ relative to $d^2$ and the resultant $P_1$ and $P_3$ relative to $d^2$ we obtain two homogeneous polynomials in $a^2$ and $g^2$ whose resultant relative to $g^2$ is a nonzero constant times $a^{50}$. As $a \ne 0$, we arrive at a contradiction.

This completes the proof of Theorem~\ref{thm:5nilp}.

\end{document}